\documentclass[reqno, 11pt]{amsart}
\usepackage{amssymb, amsmath, latexsym,enumerate,color}
\usepackage{graphicx,mathrsfs, booktabs}
\usepackage{xcolor}
\usepackage{float, leftindex, ragged2e}
\numberwithin{equation}{section}

\newcommand{\betaqa}{\begin{eqnarray}}
\newcommand{\eeqa}{\end{eqnarray}}
\newcommand{\betaqn}{\begin{eqnarray}}
\newcommand{\eeqn}{\end{eqnarray}}

\newcommand{\blr}{\begin{list}{$($\roman{cnt1}$)$}
 {\usecounter{cnt1} \setlength{\topsep}{0pt}
 \setlength{\itemsep}{0pt}}}
\newcommand{\bla}{\begin{list}{$($\alph{cnt2}$)$}
 {\usecounter{cnt2} \setlength{\topsep}{0pt}
 \setlength{\itemsep}{0pt}}}
\newcommand{\bln}{\begin{list}{$($\arabic{cnt3}$)$}
 {\usecounter{cnt3} \setlength{\topsep}{0pt}
 \setlength{\itemsep}{0pt}}}
\newcommand{\el}{\end{list}}
\newtheorem{theorem}{Theorem}[section]
\newtheorem{lemma}[theorem]{Lemma}

\newtheorem{example}[theorem]{Example}

\newtheorem{definition}[theorem]{Definition}
\newtheorem{proposition}[theorem]{Proposition}

\newcommand{\Rem}{\begin{rem} \rm}
\newcommand{\bdfn}{\begin{Def} \rm}
\newcommand{\edfn}{\end{Def}}

\newcommand{\ba}{\begin{array}}
\newcommand{\ea}{\end{array}}

\begin{document}

\begin{center}{\Large {A New Fractal Derivative and Its Integral Transform Methods for Economic Models}}
		\vspace{0.05cm}
		
	  Krishna Mani Nath$^1$, Bipan Hazarika$^{2, \ast}$  %\footnote{Corresponding author} 

$^{1,2}$Department of Mathematics, Gauhati University, Guwahati, Assam, India\\
	
{Email:  $^1$kmn.math@gmail.com; $^2$bh\_rgu@yahoo.com}
	
\end{center}
	\title{}
	\author{}
	\thanks{$^{\ast}$The corresponding author}
	\thanks{\today} 
	%\end{center} 
	\begin{abstract} 
         In this paper, we introduce the RLC-fractal derivative along with the fractal Laplace and fractal Sumudu transforms of it. Then we apply these transformations to solve economic models involving the RLC-fractal derivative.\\
         {\bf Keywords.}  Caputo fractal derivatives; fractal differential equation; Riemann-Liouville fractal derivative\\
        {\bf Mathematics Subject Classification:} 28A80, 26A33.
	\end{abstract}

	\maketitle
	
	%\maketitle

	\pagestyle{myheadings}
	\markboth{\rightline {\scriptsize {\textbf{\it K.M. Nath, B. Hazarika} }}}
	{\leftline{ \scriptsize{ \bf \it A New Fractal Derivative and Its Integral Transform Methods for Economic Models}}}	
	\maketitle

    %\tableofcontents

\section{Introduction}
Recently, researchers have recognized that many natural phenomena exhibit not only memory effects but also underlying fractal geometrical structures \cite{Barnsley,Mandelbrot,Parvate2003,Turner}. Numerous physical processes occurring in porous media, heterogeneous materials, biological tissues, and irregular surfaces evolve on domains with fractal characteristics. This observation has motivated the development of fractal differential equations, in which differentiation is defined with respect to a fractal measure or fractal dimension rather than the conventional Euclidean variable. Fractal differential operators provide an effective mathematical framework for describing dynamics on non-smooth geometries and irregular time scales, thereby extending the scope of classical and fractional calculus to systems with self-similar structures.

The integration of fractal geometry with fractional calculus has led to a new class of operators capable of simultaneously capturing memory effects and fractal characteristics. Among these, the Caputo fractal derivative has emerged as a particularly useful operator for modeling nonlocal phenomena governed by fractal differential equations. It is defined by applying a fractal integral operator to the fractal derivative $D^\alpha_Ff(S^\alpha_F(t))$ of a fractal differentiable function $f(S^\alpha_F(t))$, thereby combining the nonlocal features of fractional calculus with the geometric complexity of fractal spaces.

One of the fundamental concepts in economics is competitive equilibrium, which is achieved when the quantity of a commodity demanded by consumers equals the quantity supplied by producers. Classical equilibrium models generally assume that market dynamics evolve in smooth Euclidean spaces and that agents' interactions are governed by ordinary or fractional differential equations. However, many real-world economic systems exhibit irregular, heterogeneous, and self-similar features that cannot be adequately represented within these conventional frameworks. This motivates the use of fractal differential equations for economic modeling, as they provide a natural mathematical framework for incorporating both memory effects and fractal characteristics into market dynamics. Such an approach enables a more realistic description of complex economic processes and extends the applicability of equilibrium models to systems with non-smooth temporal or spatial structures.
%Recently, researchers have recognized that many natural phenomena possess not only memory effects but also fractal geometrical structures \cite{Barnsley,Mandelbrot,Parvate2003,Turner}. Physical processes occurring in porous media, heterogeneous materials, biological tissues, and irregular surfaces evolve in domains that exhibit fractal characteristics. This observation has motivated the development of fractal differential equations, where differentiation is defined with respect to a fractal measure or fractal dimension rather than the ordinary Euclidean variable. Fractal differential operators provide an effective mathematical framework for modeling dynamics on non-smooth geometries and irregular time scales, thereby extending the applicability of classical and fractional calculus to systems with self-similar structures.

%The combination of fractal geometry with fractional calculus has led to a new generation of operators capable of simultaneously describing memory effects and fractal properties. The Caputo fractal derivative has been one of the most useful operators for modeling non-local behaviors by fractal differential equations.  It is defined for a fractal differentiable function $f(S^\alpha_F(t))$, by a fractal integral operator applied to the derivative $D^\alpha_Ff(S^\alpha_F(t))$.

%One of the fundamental concepts in economics is competitive equilibrium, which occurs when the quantity of a commodity that consumers are willing to purchase is equal to the quantity that producers are prepared to supply. 

Mathematically, this equilibrium is characterized by the demand function $q_d$ and the supply function $q_s$, where $q_d$ denotes the quantity demanded and $q_s$ denotes the quantity supplied. Thus, when the expectations of the agents in the market are not considered, we have \begin{eqnarray}\label{i1}
    q_d(t)=d_0-d_1p(t),\;\;q_s(t)=-s_0+s_1p(t),
\end{eqnarray} where $p$ denotes the market price of the product, while $d_0$, $s_0$, $d_1$, and $s_1$ are constants that determine the levels of demand and supply. The equilibrium price is determined from the condition $$q_d(t)=q_s(t)\implies p^\ast=\dfrac{d_0+s_0}{d_1+s_1}.$$ Under this equilibrium condition, consumers demand exactly the same quantity that producers supply, resulting in a balanced market without shortage or surplus.

We consider the following simple price adjustment equation \cite{Nagle2011} \begin{align}\label{i2}
    \dfrac{dp}{dt}=\lambda(q_d-q_s),
\end{align} where $\lambda>0$. Then we get $$\dfrac{dp}{dt}+\lambda(d_1+s_1)p(t)=\lambda(d_0+s_0).$$ By solving \eqref{i2}, we obtain \begin{align*}
    p(t)=\dfrac{d_0+s_0}{d_1+s_1}-\left(p(0)+\dfrac{d_0+s_0}{d_1+s_1}\right)e^{-\lambda(d_1+s_1)t},
\end{align*} where $p(0)$ is the market price at time $t=0$. 

If we consider the expectations of the agents, \eqref{i1} changes to the following form \begin{align*}
    q_d(t)=d_0-d_1p(t)+d_2p'(t),\quad q_s(t)=-s_0+s_1p(t)-s_2 p'(t),
\end{align*} in which two additional factors $d_2,s_2\geqslant 0$ are added. For equilibrium, we have \begin{align*}
    q_d(t)=q_s(t)\implies p'(t)-\dfrac{d_1+s_1}{d_2+s_2}p(t)=-\dfrac{d_0+s_0}{d_2+s_2}
\end{align*}
By solving it, we get \begin{align*}
    p(t)=\dfrac{d_0+s_0}{d_1+s_1}-\left[p(0)+\dfrac{d_0+s_0}{d_1+s_1}\right]e^{\dfrac{d_1+s_1}{d_2+s_2}t}.
\end{align*}

We solve these types of economic models involving the RLC-fractal derivative which has been introduced in this paper. Fractal Sumudu transform has been used to solve them.

\section{Preliminaries to Fractal Calculus}

\begin{flushleft}
    {\bf Notation}: We  use $C_F^\alpha[a,b]$ to denote the family of $\alpha$-order fractal differentiable functions on $[a,b]$.
\end{flushleft}
\begin{lemma}\cite{Ali2,NHK2026}\label{rsl0}
        Let $\mathscr{S}(S_F^\alpha(v))$ and $\mathfrak{L}(S_F^\alpha(v))$ denote the fractal Sumudu tranform and the fractal Laplace tranform of a function $f(S_F^\alpha(t))$, respectively. Then \begin{eqnarray*}
            \mathfrak{L}[f](S_F^\alpha(v))=\dfrac{1}{S_F^\alpha(v)}\mathscr{S}[f]\left(\dfrac{1}{S_F^\alpha(v)}\right).
        \end{eqnarray*} Equivalently, \begin{eqnarray*}
            \mathscr{S}[f](S_F^\alpha(v))=\dfrac{1}{S_F^\alpha(v)}\mathfrak{L}[f]\left(\dfrac{1}{S_F^\alpha(v)}\right).
        \end{eqnarray*}
    \end{lemma}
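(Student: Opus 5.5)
The plan is to argue directly from the defining integrals of the two transforms (as in \cite{Ali2,NHK2026}), using a change of variables in the fractal integral. I take the fractal Laplace transform to be
\begin{eqnarray*}
\mathfrak{L}[f](S_F^\alpha(v))=\int_0^\infty e^{-S_F^\alpha(v)S_F^\alpha(t)}f(S_F^\alpha(t))\,d_F^\alpha t ,
\end{eqnarray*}
and the fractal Sumudu transform to be
\begin{eqnarray*}
\mathscr{S}[f](S_F^\alpha(v))=\dfrac{1}{S_F^\alpha(v)}\int_0^\infty e^{-S_F^\alpha(t)/S_F^\alpha(v)}f(S_F^\alpha(t))\,d_F^\alpha t ,
\end{eqnarray*}
both taken over the fractal set $F$. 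I assume $f$ is of fractal exponential order, so that both integrals converge for the relevant values of the transform variable.

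For the first identity, I evaluate $\mathscr{S}[f]$ at the point $1/S_F^\alpha(v)$. This means replacing the transform variable $S_F^\alpha(v)$ in the Sumudu definition by $1/S_F^\alpha(v)$. The prefactor then becomes $S_F^\alpha(v)$, and the kernel becomes $e^{-S_F^\alpha(v)S_F^\alpha(t)}$. Multiplying by $1/S_F^\alpha(v)$ cancels the prefactor and leaves exactly the Laplace integral. This proves the first identity.

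The second identity follows by the same substitution in the other direction. Alternatively, I apply the first identity with the transform variable replaced by its reciprocal and solve for $\mathscr{S}$.

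The only real obstacle is justifying that evaluating the transform at $1/S_F^\alpha(v)$ is meaningful. The transform variable enters only through the value $S_F^\alpha(v)$, a positive real number. So I will interpret $\mathscr{S}[f](1/S_F^\alpha(v))$ as the defining integral with the parameter replaced by that real number. I will also check that the convergence regions match under $u\mapsto 1/u$: the Laplace integral converges for $S_F^\alpha(v)$ larger than the growth rate $c$, and the Sumudu integral converges for $0<S_F^\alpha(v)<1/c$. Once this is noted, the rest is a direct algebraic comparison of integrands, with no need for the fractal Fubini theorem or any integration by parts.
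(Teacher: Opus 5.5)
Your proposal is correct. With the kernel forms of the fractal Laplace and Sumudu transforms that you fix, evaluating $\mathscr{S}[f]$ at the reciprocal parameter cancels the prefactor and reproduces the Laplace integral, and you handle the only delicate points (reading $1/S_F^\alpha(v)$ as a real parameter, and matching the convergence regions under $u\mapsto 1/u$). The paper gives no proof of its own here and simply cites \cite{Ali2,NHK2026}; your direct comparison of the defining integrals is the standard argument behind that duality.
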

\begin{definition}\cite{Ali3}
    If $f\in C_F^\alpha[a,b]$ and $\beta>0$, then the Riemann-Liouville fractal integral of order $\beta$ is defined as \begin{eqnarray*}
        ^{}_a{\mathcal{I}}_x^\beta f(S_F^\alpha(x)):=\dfrac{1}{\Gamma_F^\alpha(\beta)}\int_{S_F^\alpha(a)}^{S_F^\alpha(x)}\dfrac{f(S_F^\alpha(t))}{(S_F^\alpha(x)-S_F^\alpha(t))^{\alpha-\beta}}d_F^\alpha t,\;\;S_F^\alpha(x)>S_F^\alpha(a).
    \end{eqnarray*} 
\end{definition}
\begin{theorem} 
    Let $f\in C_F^\alpha[a,b]$ and $\beta>0$, then the fractal Laplace transform of $^{}_a{\mathcal{I}}_x^\beta f(S_F^\alpha(x))$ is \cite{Ali4} \begin{eqnarray}\label{rli1}
            \mathfrak{L}\left[^{}_0{\mathcal{I}}_x^\beta f(S_F^\alpha(x))\right]=\dfrac{\mathfrak{L}[f(S_F^\alpha(x))]}{S_F^\alpha(v)^\beta}.
        \end{eqnarray} and the corresponding fractal Sumudu transform is \cite{NHK2026}
        \begin{eqnarray*}          \mathscr{S}\left[^{}_0{\mathcal{I}}_x^\beta f(S_F^\alpha(x))\right]=S_F^\alpha(v)^\beta\mathscr{S}[f](S_F^\alpha(v)).
        \end{eqnarray*}
\end{theorem}
\begin{definition}\cite{Ali3}
    The fractal Mittag-Leffler function with one parameter is \begin{eqnarray*}        E_F^\alpha(S_F^\alpha(t))=\sum_{j=0}^\infty\dfrac{S_F^\alpha(t)^j}{\Gamma_F^\alpha(\eta j+a)},\;\eta>0,\;a\in\mathbb{R}.
    \end{eqnarray*}
\end{definition}
\begin{theorem} \cite{NHK2026}
    The fractal Sumudu transform of $E_F^\alpha(S_F^\alpha(t))$ is \begin{eqnarray*}
        \mathscr{S}\left[E_F^\alpha(S_F^\alpha(x))\right](S_F^\alpha(v))=\sum_{j=0}^\infty\dfrac{j!S_F^\alpha(v)^j}{\Gamma_F^\alpha(\gamma j+a)}.
    \end{eqnarray*}
\end{theorem}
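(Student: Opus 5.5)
The idea is to transform the series term by term, using the fractal Sumudu transform of a fractal power function together with linearity.

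First I record the Sumudu image of a single power. The fractal Laplace transform is
\[
\mathfrak{L}[f](S_F^\alpha(v))=\int_0^\infty e^{-S_F^\alpha(v)S_F^\alpha(t)}f(S_F^\alpha(t))\,d_F^\alpha t .
\]
The fractal analogue of the Gamma integral, obtained by substituting $u=S_F^\alpha(v)S_F^\alpha(t)$ in the fractal integral, gives
\[
\mathfrak{L}[S_F^\alpha(t)^j]=\frac{\Gamma_F^\alpha(j+1)}{S_F^\alpha(v)^{j+1}}=\frac{j!}{S_F^\alpha(v)^{j+1}} .
\]
By Lemma~\ref{rsl0}, the Sumudu transform at $S_F^\alpha(v)$ equals $\frac{1}{S_F^\alpha(v)}$ times the Laplace transform evaluated at $1/S_F^\alpha(v)$. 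This yields
\[
\mathscr{S}[S_F^\alpha(t)^j](S_F^\alpha(v))=j!\,S_F^\alpha(v)^j .
\]
Alternatively, one can compute this directly from the defining integral of $\mathscr{S}$.

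Next, I insert the series definition of $E_F^\alpha$ from the preceding Definition. Each coefficient $1/\Gamma_F^\alpha(\eta j+a)$ is a constant, so linearity of $\mathscr{S}$ gives
\[
\mathscr{S}[E_F^\alpha](S_F^\alpha(v))=\sum_{j\ge0}\frac{j!\,S_F^\alpha(v)^j}{\Gamma_F^\alpha(\eta j+a)} .
\]
This is the stated formula, with $\gamma$ in the statement read as the parameter $\eta$ of the definition.

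The main obstacle is justifying the interchange of the infinite sum with the improper fractal integral. I would handle it by dominated convergence, or Tonelli applied to absolute values. The bound is
\[
\sum_j \frac{S_F^\alpha(t)^j}{|\Gamma_F^\alpha(\eta j+a)|}\,e^{-S_F^\alpha(t)/S_F^\alpha(v)} .
\]
Its integral equals $\sum_j j!\,S_F^\alpha(v)^{j+1}/|\Gamma_F^\alpha(\eta j+a)|$, which is finite for $\eta>1$ by Stirling. For $\eta\le 1$, and in general, this holds at least for $S_F^\alpha(v)$ small enough. In that case I would state the identity either formally or on its region of convergence.
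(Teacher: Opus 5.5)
The paper gives no proof of this statement; it is quoted from \cite{NHK2026}. Your termwise argument is the natural route and is correct as a formal computation: you obtain $\mathscr{S}[S_F^\alpha(t)^j](S_F^\alpha(v))=j!\,S_F^\alpha(v)^j$ via Lemma~\ref{rsl0} and then apply linearity. Reading $\gamma$ as $\eta$ is also right.

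One small point: do not pass through the claim $\Gamma_F^\alpha(j+1)=j!$. The paper never defines $\Gamma_F^\alpha$, and that identity fails for $\alpha\neq1$ under, e.g., the normalization $\Gamma_F^\alpha(x)=\int_0^\infty e^{-S_F^\alpha(t)}S_F^\alpha(t)^{x-\alpha}\,d_F^\alpha t$. That is the normalization matching the exponent $\beta-\alpha$ in the fractal Riemann--Liouville kernel. Your substitution $u=S_F^\alpha(v)S_F^\alpha(t)$ already gives the classical integral $\int_0^\infty e^{-u}u^j\,du=j!$, which is exactly the $j!$ in the statement, so cite that directly.

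The genuine error is in your convergence discussion. For $0<\eta<1$ it is false that the interchange works for small $S_F^\alpha(v)$. The ratio of consecutive terms of $\sum_j j!\,s^j/\Gamma_F^\alpha(\eta j+a)$ behaves like $(j+1)s/(\eta j)^{\eta}\to\infty$ for every $s\neq0$, so the right-hand side has radius of convergence $0$. Consistently, $E_F^\alpha$ then grows like $\exp\bigl(S_F^\alpha(t)^{1/\eta}\bigr)$ up to a power factor, faster than any exponential. Hence the defining Sumudu integral of $E_F^\alpha$ diverges for every $S_F^\alpha(v)>0$.

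Your Tonelli/dominated-convergence argument is complete in two cases:
\begin{itemize}
\item for $\eta>1$, for all $S_F^\alpha(v)>0$;
\item for $\eta=1$, but only for $S_F^\alpha(v)<1$.
\end{itemize}
For $\eta<1$ there is no region of convergence at all, and the statement can only be read as a formal power-series identity. State these restrictions explicitly rather than claiming validity for small $S_F^\alpha(v)$ in general.
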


\begin{definition}\cite{Ali3}
    Let $f\in C_F^{\alpha n}[a,b]$, then the Riemann-Liouville fractal derivative of order $\beta\in[0,1)$ is defined as \begin{eqnarray*}
        {^{}_a}\mathcal{D}_b^\beta f(S_F^\alpha(x)):=\dfrac{1}{\Gamma_F^\alpha(n-\beta)}(D_F^\alpha)^n\int_a^x\dfrac{f(S_F^\alpha(t))}{(S_F^\alpha(x)-S_F^\alpha(t))^{-n+\beta+\alpha}}d_F^\alpha t,
    \end{eqnarray*} where $n-\alpha\leqslant\beta<n.$
\end{definition}
\begin{theorem} \cite{NHK2026}
    Let $f\in C_F^{\alpha n}[a,b]$ and $\beta>0$, then the Sumudu transform of $^{}_0\mathcal{D}_x^\beta f(S_F^\alpha(x))$ is \begin{eqnarray*}
        \mathscr{S}\left[^{}_0\mathcal{D}_x^\beta f\right](S_F^\alpha(v))=\dfrac{1}{S_F^\alpha(v)^\beta}\mathscr{S}[f](S_F^\alpha(v))-\sum_{k=1}^{n}\dfrac{1}{S_F^\alpha(v)^{k}}\left[^{}_0\mathcal{D}_x^{\beta-k}f(S_F^\alpha(x))\right]_{x=0}.
    \end{eqnarray*}
\end{theorem}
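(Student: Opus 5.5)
The approach is to compute the fractal Laplace transform of the Riemann--Liouville fractal derivative first, and then pass to the Sumudu side using Lemma~\ref{rsl0}.

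\textbf{Step 1: factor the derivative.} From the definition, with $n$ chosen so that $n-\alpha\leqslant\beta<n$, we can write
\begin{eqnarray*}
{^{}_0}\mathcal{D}_x^\beta f(S_F^\alpha(x)) = (D_F^\alpha)^n\, g(S_F^\alpha(x)), \qquad g := {^{}_0}\mathcal{I}_x^{n-\beta} f,
\end{eqnarray*}
reading the kernel exponent $-n+\beta+\alpha=\alpha-(n-\beta)$ against the definition of the Riemann--Liouville fractal integral.

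\textbf{Step 2: differentiation rule for the Laplace transform.} For $h\in C_F^{\alpha n}$, I use the rule
\begin{eqnarray*}
\mathfrak{L}[(D_F^\alpha)^n h](S_F^\alpha(v)) = S_F^\alpha(v)^n\,\mathfrak{L}[h] - \sum_{k=1}^{n} S_F^\alpha(v)^{n-k}\,(D_F^\alpha)^{k-1}h(0).
\end{eqnarray*}
If this is not available from the cited literature, I prove it for $n=1$ by fractal integration by parts against the kernel $e^{-S_F^\alpha(v)S_F^\alpha(t)}$. The key input is $D_F^\alpha S_F^\alpha(t)=1$, which gives
\begin{eqnarray*}
D_F^\alpha e^{-S_F^\alpha(v)S_F^\alpha(t)} = -S_F^\alpha(v)\,e^{-S_F^\alpha(v)S_F^\alpha(t)}.
\end{eqnarray*}
The boundary term at infinity vanishes under the usual exponential-order assumption. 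The general case then follows by induction on $n$. I expect this to be the main obstacle: it needs the fractal chain rule and integration by parts on $F$, together with enough growth control for the boundary term to vanish.

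\textbf{Step 3: apply to $h=g$.} Apply Step 2 with $h=g$ and use \eqref{rli1}, which gives $\mathfrak{L}[g]=\mathfrak{L}[f]/S_F^\alpha(v)^{n-\beta}$. The leading term becomes $S_F^\alpha(v)^\beta\mathfrak{L}[f]$. For the initial terms, note that
\begin{eqnarray*}
(D_F^\alpha)^{k-1}\,{^{}_0}\mathcal{I}_x^{n-\beta} f = {^{}_0}\mathcal{D}_x^{\beta-n+k-1} f .
\end{eqnarray*}
Reindexing with $j=n-k+1$ turns this into ${^{}_0}\mathcal{D}^{\beta-j}f$ with coefficient $S_F^\alpha(v)^{j-1}$. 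So
\begin{eqnarray*}
\mathfrak{L}[{^{}_0}\mathcal{D}^\beta_x f] = S_F^\alpha(v)^\beta\,\mathfrak{L}[f] - \sum_{j=1}^{n} S_F^\alpha(v)^{j-1}\left[{^{}_0}\mathcal{D}_x^{\beta-j} f\right]_{x=0}.
\end{eqnarray*}

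\textbf{Step 4: convert to the Sumudu transform.} Apply the second identity of Lemma~\ref{rsl0}, which replaces $S_F^\alpha(v)$ by $1/S_F^\alpha(v)$ and multiplies by $1/S_F^\alpha(v)$. The leading term becomes $S_F^\alpha(v)^{-\beta}\cdot\frac{1}{S_F^\alpha(v)}\mathfrak{L}[f](1/S_F^\alpha(v)) = S_F^\alpha(v)^{-\beta}\mathscr{S}[f]$. The $j$-th boundary term becomes $S_F^\alpha(v)^{-(j-1)}\cdot S_F^\alpha(v)^{-1} = S_F^\alpha(v)^{-j}$. Together these give exactly the stated formula.
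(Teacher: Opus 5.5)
Your argument is correct. The paper gives no proof of its own here, since the result is quoted from \cite{NHK2026}, but your route (factoring ${}_0\mathcal{D}_x^\beta=(D_F^\alpha)^n\,{}_0\mathcal{I}_x^{n-\beta}$, computing the fractal Laplace transform from \eqref{rli1} and the integer-order rule, then passing to the Sumudu side with Lemma~\ref{rsl0}) is the same Laplace-then-duality pattern the paper uses for its RLC transform theorem. One point to state explicitly: Step~2 is applied to $g={}_0\mathcal{I}_x^{n-\beta}f$ rather than to $f$, so you need its hypotheses for $g$. In particular you need finite values $[(D_F^\alpha)^{k-1}g]_{x=0}=[{}_0\mathcal{D}_x^{\beta-n+k-1}f]_{x=0}$, which do not follow from $f\in C_F^{\alpha n}$ alone but which the statement itself tacitly assumes.
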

\begin{definition}\label{def1}\cite{Ali3}
    Let $f\in C_F^{\alpha n}[a,b]$ and $\beta>0$, then Caputo fractal derivative of order $\beta$ is given by \begin{eqnarray*}
        ^{C}_a\mathcal{D}_x^\beta f(S_F^\alpha(x)):=\dfrac{1}{\Gamma_F^\alpha(n-\beta)}\int_{S_F^\alpha(a)}^{S_F^\alpha(x)} \left(S_F^\alpha(x)-S_F^\alpha(t)\right)^{n\alpha-\beta-\alpha}(D_F^\alpha)^nf(S_F^\alpha(t))d_F^\alpha t,
    \end{eqnarray*} where $n\alpha-\alpha\leqslant\beta<n\alpha$.
\end{definition}
\begin{theorem} 
    Let $f\in C_F^{\alpha n}[a,b]$ and $\beta>0$. Then the Laplace tranform of $^{C}_0\mathcal{D}_x^\beta f(S_F^\alpha(x))$ is given by \cite{Ali3} \begin{eqnarray}\label{cd2}
            \mathfrak{L}\left[^{C}_0\mathcal{D}_x^\beta f\right](S_F^\alpha(v))=S_F^\alpha(v)^\beta\mathfrak{L}[f](S_F^\alpha(v))-\sum_{j=1}^{n}S_F^\alpha(v)^{\beta-j}\left[(D_F^\alpha)^{j-1}f(S_F^\alpha(x))\right]_{x=0},
        \end{eqnarray} and the corresponding  Sumudu transformation is given by \cite{Ali2,NHK2026} \begin{eqnarray*}
            \mathscr{S}\left[^{C}_0\mathcal{D}_x^\beta f\right](S_F^\alpha(v))=\dfrac{1}{S_F^\alpha(v)^\beta}\mathscr{S}[f](S_F^\alpha(v))-\sum_{j=0}^{n-1}\dfrac{1}{S_F^\alpha(v)^{\beta-j}}\left[(D_F^\alpha)^{j}f(S_F^\alpha(x))\right]_{x=0}.
        \end{eqnarray*} 
\end{theorem}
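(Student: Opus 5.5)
The plan is to first establish the Laplace formula \eqref{cd2} and then obtain the Sumudu formula from it through the duality in Lemma~\ref{rsl0}. By Definition~\ref{def1}, the Caputo fractal derivative is a fractal Riemann--Liouville integral applied to the $n$-th fractal derivative. Since $(S_F^\alpha(x)-S_F^\alpha(t))^{n\alpha-\beta-\alpha}$ has the form $(S_F^\alpha(x)-S_F^\alpha(t))^{\gamma-\alpha}$ with $\gamma=n\alpha-\beta$, we can write
\begin{eqnarray*}
{}^{C}_0\mathcal{D}_x^\beta f(S_F^\alpha(x))={}_0\mathcal{I}_x^{\,n\alpha-\beta}\big[(D_F^\alpha)^n f\big](S_F^\alpha(x)).
\end{eqnarray*}
The normalising constant has to be checked against $\Gamma_F^\alpha(n-\beta)$; I will treat this as a bookkeeping identification of the order. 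Applying \eqref{rli1} then gives $\mathfrak{L}[{}^{C}_0\mathcal{D}_x^\beta f]=S_F^\alpha(v)^{-(n\alpha-\beta)}\,\mathfrak{L}[(D_F^\alpha)^n f]$.

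Next I need the fractal Laplace transform of an integer-order fractal derivative:
\begin{eqnarray*}
\mathfrak{L}[(D_F^\alpha)^n f](S_F^\alpha(v))=S_F^\alpha(v)^n\mathfrak{L}[f]-\sum_{j=1}^{n}S_F^\alpha(v)^{n-j}\big[(D_F^\alpha)^{j-1}f\big]_{x=0}.
\end{eqnarray*}
This follows by induction on $n$. The case $n=1$ uses fractal integration by parts in $\int_0^\infty e^{-S_F^\alpha(v)S_F^\alpha(x)}D_F^\alpha f\,d_F^\alpha x$, together with the fractal chain rule $D_F^\alpha e^{-sS_F^\alpha(x)}=-s\,e^{-sS_F^\alpha(x)}$ and vanishing of the boundary term at infinity. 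The inductive step applies the case $n=1$ to $(D_F^\alpha)^{n-1}f$.

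Substituting this into the previous expression and matching the power of $S_F^\alpha(v)$ in the leading term with the exponent $\beta$ yields \eqref{cd2}. The main obstacle is exactly this matching: the exponent conventions ($n\alpha-\beta$ versus $n-\beta$) must be reconciled so that the leading term is $S_F^\alpha(v)^\beta$ and the $j$-th boundary term carries $S_F^\alpha(v)^{\beta-j}$. I would resolve it by reading the order of the fractal integral in the $\Gamma_F^\alpha$ normalisation used by \cite{Ali3}, so that $n$ effectively plays the role of $n\alpha$ in the exponent.

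For the Sumudu formula, apply Lemma~\ref{rsl0}. Set $u=1/S_F^\alpha(v)$, so that $\mathscr{S}[g](S_F^\alpha(v))=u\,\mathfrak{L}[g](u)$ and $\mathfrak{L}[f](u)=\mathscr{S}[f](S_F^\alpha(v))/u$. Substituting \eqref{cd2} evaluated at $u$ gives
\begin{eqnarray*}
\mathscr{S}[{}^{C}_0\mathcal{D}_x^\beta f]=u\Big(u^\beta\,\tfrac{1}{u}\mathscr{S}[f]-\sum_{j=1}^{n}u^{\beta-j}\big[(D_F^\alpha)^{j-1}f\big]_{0}\Big).
\end{eqnarray*}
Simplifying, the leading term is $S_F^\alpha(v)^{-\beta}\mathscr{S}[f]$, and the $j$-th boundary term carries $u^{\beta-j+1}=S_F^\alpha(v)^{-(\beta-(j-1))}$. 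Reindexing with $j-1\mapsto j$ gives the stated sum over $j=0,\dots,n-1$.
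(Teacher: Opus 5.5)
The paper gives no proof of this theorem. It quotes the Laplace formula from \cite{Ali3} and the Sumudu formula from \cite{Ali2,NHK2026}. Your Sumudu half is correct, and it is the same manoeuvre the paper uses in Section~3 for the RLC-derivative. Substituting \eqref{cd2} at $u=1/S_F^\alpha(v)$ into Lemma~\ref{rsl0} gives $u^\beta\mathscr{S}[f]-\sum_{j=1}^{n}u^{\beta-j+1}[(D_F^\alpha)^{j-1}f]_{x=0}$, and the shift $j-1\mapsto j$ yields the stated sum. Your integer-order rule for $\mathfrak{L}[(D_F^\alpha)^nf]$, by fractal integration by parts and induction, is also fine.

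The genuine gap is the step you call bookkeeping. In the paper's Riemann--Liouville fractal integral, order $\gamma$ means kernel exponent $\gamma-\alpha$ and normalisation $1/\Gamma_F^\alpha(\gamma)$. With that convention, Definition~\ref{def1} says exactly
\[
{}^{C}_0\mathcal{D}_x^\beta f=\frac{\Gamma_F^\alpha(n\alpha-\beta)}{\Gamma_F^\alpha(n-\beta)}\;{}_0\mathcal{I}_x^{\,n\alpha-\beta}\bigl[(D_F^\alpha)^nf\bigr].
\]
Then \eqref{rli1} together with your integer-order rule gives
\[
\mathfrak{L}\bigl[{}^{C}_0\mathcal{D}_x^\beta f\bigr]=\frac{\Gamma_F^\alpha(n\alpha-\beta)}{\Gamma_F^\alpha(n-\beta)}\Bigl(S_F^\alpha(v)^{\beta+n(1-\alpha)}\mathfrak{L}[f]-\sum_{j=1}^{n}S_F^\alpha(v)^{\beta+n(1-\alpha)-j}\bigl[(D_F^\alpha)^{j-1}f\bigr]_{x=0}\Bigr).
\]
This agrees with \eqref{cd2} for all $f$ only when $\alpha=1$. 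So for $\alpha\neq1$, \eqref{cd2} cannot be derived from the definitions as printed, and reinterpreting notation will not fix that without changing a definition.

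What you propose, letting $n$ play the role of $n\alpha$, amounts to declaring ${}^{C}_0\mathcal{D}_x^\beta:={}_0\mathcal{I}_x^{\,n-\beta}(D_F^\alpha)^n$. That replaces the kernel exponent $n\alpha-\beta-\alpha$ in Definition~\ref{def1} by $n-\beta-\alpha$ while keeping the normalisation $\Gamma_F^\alpha(n-\beta)$. Under that definition your argument is a complete proof of both formulas, since $S_F^\alpha(v)^{-(n-\beta)}S_F^\alpha(v)^{n-j}=S_F^\alpha(v)^{\beta-j}$. You should state it explicitly as a modified working definition, or restrict to $\alpha=1$, rather than present it as a consequence of Definition~\ref{def1}.
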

\begin{definition} \cite{Ali5}
    Let $f,g:[0,\infty)\to\mathbb{R}$, then the convolution of $f$ and $g$ is given by \begin{eqnarray*}
        (f\ast g)(S_F^\alpha(t))=\int_{S_F^\alpha(0)}^{S_F^\alpha(t)}f\left(S_F^\alpha(t)-S_F^\alpha(z)\right)g\left(S_F^\alpha(z)\right)d_F^\alpha z.
    \end{eqnarray*} 
\end{definition}
\begin{theorem} \cite{NHK2026}
    Assume that $f,g:[0,\infty)\to\mathbb{R}$, then the Sumudu transformation of the convolution $f\ast g$ is \begin{eqnarray*}
        \mathscr{S}[f\ast g](S_F^\alpha(v))=S_F^\alpha(v)\,\mathscr{S}[f](S_F^\alpha(v))\,\mathscr{S}[g](S_F^\alpha(v)).
    \end{eqnarray*}
\end{theorem}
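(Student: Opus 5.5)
The statement is the convolution theorem for the fractal Sumudu transform. My plan is to derive it from the corresponding fractal Laplace convolution theorem, using the Laplace–Sumudu duality of Lemma \ref{rsl0}.

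The first step is to record the fractal Laplace convolution rule
\[
\mathfrak{L}[f\ast g](S_F^\alpha(v))=\mathfrak{L}[f](S_F^\alpha(v))\,\mathfrak{L}[g](S_F^\alpha(v)).
\]
If it is not taken as known from \cite{Ali5}, I will prove it directly. Write
\[
\mathfrak{L}[f\ast g]=\int_{S_F^\alpha(0)}^{\infty} e^{-S_F^\alpha(v)S_F^\alpha(t)}\int_{S_F^\alpha(0)}^{S_F^\alpha(t)} f\bigl(S_F^\alpha(t)-S_F^\alpha(z)\bigr)g\bigl(S_F^\alpha(z)\bigr)\,d_F^\alpha z\,d_F^\alpha t .
\]
Then apply a Fubini interchange over the region $S_F^\alpha(0)\le S_F^\alpha(z)\le S_F^\alpha(t)$. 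After that, substitute $S_F^\alpha(u)=S_F^\alpha(t)-S_F^\alpha(z)$ in the inner $t$-integral. This substitution uses the translation behaviour of the staircase integral, in the same way the paper's convolution definition already treats $S_F^\alpha(t)-S_F^\alpha(z)$ as an argument. The exponential then factors as $e^{-S_F^\alpha(v)S_F^\alpha(u)}e^{-S_F^\alpha(v)S_F^\alpha(z)}$, and the double integral splits into the product of the two transforms.

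Next, I will use the second identity of Lemma \ref{rsl0},
\[
\mathscr{S}[h](S_F^\alpha(v))=\frac{1}{S_F^\alpha(v)}\,\mathfrak{L}[h]\!\left(\frac{1}{S_F^\alpha(v)}\right),
\]
with $h=f\ast g$. Combined with the Laplace rule, this gives
\[
\mathscr{S}[f\ast g](S_F^\alpha(v))=\frac{1}{S_F^\alpha(v)}\,\mathfrak{L}[f]\!\left(\frac{1}{S_F^\alpha(v)}\right)\mathfrak{L}[g]\!\left(\frac{1}{S_F^\alpha(v)}\right).
\]
I then rewrite each Laplace factor as $S_F^\alpha(v)\,\mathscr{S}[f](S_F^\alpha(v))$ and $S_F^\alpha(v)\,\mathscr{S}[g](S_F^\alpha(v))$, again by Lemma \ref{rsl0}. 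This yields
\[
\frac{1}{S_F^\alpha(v)}\cdot S_F^\alpha(v)^2\,\mathscr{S}[f](S_F^\alpha(v))\,\mathscr{S}[g](S_F^\alpha(v))=S_F^\alpha(v)\,\mathscr{S}[f](S_F^\alpha(v))\,\mathscr{S}[g](S_F^\alpha(v)),
\]
which is the claim.

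An alternative is to compute directly from the Sumudu definition $\mathscr{S}[h](S_F^\alpha(v))=\int e^{-S_F^\alpha(t)}h\bigl(S_F^\alpha(v)S_F^\alpha(t)\bigr)\,d_F^\alpha t$. There the scaling $S_F^\alpha(t)\mapsto S_F^\alpha(v)S_F^\alpha(t)$ produces the extra factor $S_F^\alpha(v)$ explicitly.

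The main obstacle is justifying the Fubini interchange and the shift substitution for $F^\alpha$-integrals, since $S_F^\alpha$ is not a linear change of variables. I would handle this by conjugating the fractal integral through the staircase function to an ordinary Lebesgue–Stieltjes integral in the variable $S_F^\alpha(t)$, where both operations are standard. Exponential-order growth assumptions on $f$ and $g$ ensure absolute convergence.
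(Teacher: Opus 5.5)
The paper gives no proof of this statement (it is quoted from \cite{NHK2026}), so there is no in-paper argument to compare against, and your proof has to stand on its own. It does.

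Write $s=S_F^\alpha(v)$. Lemma~\ref{rsl0} then gives $\mathscr{S}[f\ast g](s)=s^{-1}\mathfrak{L}[f\ast g](1/s)=s^{-1}\mathfrak{L}[f](1/s)\,\mathfrak{L}[g](1/s)=s^{-1}\bigl(s\,\mathscr{S}[f](s)\bigr)\bigl(s\,\mathscr{S}[g](s)\bigr)$, which is the claim. The fractal Laplace convolution rule you need is literally the classical one after the conjugacy $T=S_F^\alpha(t)$, $Z=S_F^\alpha(z)$. This works because the convolution and both transforms depend on $t,z$ only through these values, with the standard normalization $S_F^\alpha(0)=0$.

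Two points should be stated explicitly rather than left implicit:
\begin{itemize}
\item Evaluating $\mathfrak{L}[f]$ ``at $1/S_F^\alpha(v)$'' only makes sense once the transforms are regarded as functions of the real parameter $s$. Your conjugacy supplies exactly this.
\item The identity holds only for those $s$ with $1/s$ exceeding the exponential orders of $f$ and $g$. Your growth hypothesis is therefore a genuine addition to the statement as written, which assumes nothing about $f,g$.
\end{itemize}

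Of the two routes you mention, the direct computation from the Sumudu kernel needs the same Fubini swap plus the rescaling $Z=sW$, which produces the factor $s$ analytically. The duality route confines all the analysis to the Laplace setting, where it is standard. It also makes the factor $S_F^\alpha(v)$ pure bookkeeping: the prefactor $1/S_F^\alpha(v)$ occurs once on the left and twice on the right.
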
    

\section{Combined Riemann-Liouville and Caputo (RLC) type fractal derivative}

We define a new type of fractal derivative by taking the linear combination of the Riemann-Liouville fractal integral and the Caputo fractal derivative.
\begin{definition}\label{def2}
    The combined Riemann-Liouville and Caputo fractal derivative (RLC-derivative) is defined as \begin{align*}
        ^{RLC}_0\mathcal{D}_t^\beta f(S^\alpha_F(t))&=L_1(\beta)^{}_a{\mathcal{I}}_t^{1-\beta} f(S^\alpha_F(t))+L_2(\beta)^{C}_0\mathcal{D}_t^\beta f(S^\alpha_F(t))\nonumber\\&=\dfrac{1}{\Gamma^\alpha_F(1-\beta)}\int_{S^\alpha_F(0)}^{s^\alpha_F(t)}\left(L_1(\beta)f(S^\alpha_F(y))\right.\nonumber\\&\left.+L_2(\beta)D^\alpha_Ff(S^\alpha_F(y))\right)(S^\alpha_F(t)-S^\alpha_F(y))^{-\beta}d^\alpha_Fy,%\label{rlc1}
    \end{align*} where $L_1$ and $L_2$ are functions depending on $\beta$ only. Equivalently, \begin{align*}
        ^{RLC}_0\mathcal{D}_t^\beta f(S^\alpha_F(t))&=^{}_a{\mathcal{I}}_t^{1-\beta}\left[L_1(\beta)f(S^\alpha_F(t))+L_2(\beta)D_F^\alpha f(S^\alpha_F(t))\right]\nonumber\\&=L_1(\beta)^{}_a{\mathcal{I}}_t^{1-\beta}f(S^\alpha_F(t))+L_2(\beta)^{C}_a\mathcal{D}_x^\beta f(S^\alpha_F(t)).
    \end{align*}
\end{definition}
\begin{theorem}
    The Laplace transform of $^{RLC}_0\mathcal{D}_x^\beta f(S^\alpha_F(t))$ is \begin{align*}
        \mathcal{L}[^{RLC}_0\mathcal{D}_t^\beta f](S^\alpha_F(v))=\left(\frac{L_1(\beta)}{S^\alpha_F(v)}+L_2(\beta)\right)S^\alpha_F(v)^\beta\mathcal{L}[f](S^\alpha_F(v))-L_2(\beta)S^\alpha_F(v)^{\beta-1}f(0),
    \end{align*} and the corresponding Sumudu transform is \begin{align*}
        \mathscr{S}[^{RLC}_0\mathcal{D}_t^\beta f](S^\alpha_F(v))=\dfrac{S^\alpha_F(v)L_1(\beta)+L_2(\beta)}{S^\alpha_F(v)^\beta}\mathscr{S}[f](S^\alpha_F(v))-\dfrac{L_2(\beta)}{S^\alpha_F(v)^\beta}f(0).
    \end{align*} 
\end{theorem}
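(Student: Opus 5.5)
We take the fractal Laplace transform of the second form of Definition \ref{def2},
\begin{align*}
{}^{RLC}_0\mathcal{D}_t^\beta f=L_1(\beta)\,{}_0\mathcal{I}_t^{1-\beta}f+L_2(\beta)\,{}^{C}_0\mathcal{D}_t^\beta f,
\end{align*}
and use linearity of $\mathfrak{L}$, which holds because the fractal Laplace transform is an integral against $d_F^\alpha t$. This reduces the statement to two transforms already recorded in the preliminaries.

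\emph{The integral term.} Apply \eqref{rli1} with order $1-\beta$ in place of $\beta$. This gives
\begin{align*}
\mathfrak{L}[{}_0\mathcal{I}_t^{1-\beta}f]=S_F^\alpha(v)^{\beta-1}\mathfrak{L}[f]=\frac{S_F^\alpha(v)^\beta}{S_F^\alpha(v)}\mathfrak{L}[f].
\end{align*}
We need $1-\beta>0$, so we assume $0<\beta<1$, which is the range in which Definition \ref{def2} uses a single derivative $D_F^\alpha$.

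\emph{The Caputo term.} Apply \eqref{cd2} with $n=1$. The sum then has the single term $j=1$, and we get
\begin{align*}
\mathfrak{L}[{}^{C}_0\mathcal{D}_t^\beta f]=S_F^\alpha(v)^\beta\mathfrak{L}[f]-S_F^\alpha(v)^{\beta-1}f(0).
\end{align*}

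\emph{Combining.} Multiply the two results by $L_1(\beta)$ and $L_2(\beta)$ and add. Factoring $S_F^\alpha(v)^\beta\mathfrak{L}[f]$ gives the claimed Laplace formula.

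\emph{The Sumudu form.} Use Lemma \ref{rsl0} in the form $\mathscr{S}[g](S_F^\alpha(v))=\frac{1}{S_F^\alpha(v)}\mathfrak{L}[g](1/S_F^\alpha(v))$. In the Laplace formula, replace $S_F^\alpha(v)$ by $1/S_F^\alpha(v)$ and multiply by $1/S_F^\alpha(v)$. Then $\mathfrak{L}[f](1/S_F^\alpha(v))$ becomes $S_F^\alpha(v)\mathscr{S}[f](S_F^\alpha(v))$, and the coefficient $(L_1 S + L_2)S^{-\beta}$ appears after simplifying, where $S=S_F^\alpha(v)$. The $f(0)$ term becomes $\frac{1}{S}L_2 S^{1-\beta}f(0)=L_2S^{-\beta}f(0)$. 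This matches the claim.

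As a cross-check, the same result follows directly from the Sumudu formulas already quoted for the Riemann--Liouville integral and for the Caputo derivative with $n=1$. The integral term gives $L_1 S^{1-\beta}\mathscr{S}[f]$. The Caputo term gives $L_2\bigl(S^{-\beta}\mathscr{S}[f]-S^{-\beta}f(0)\bigr)$. Their sum is again the claimed expression.

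\emph{Main obstacle.} The step needing the most care is matching indices and hypotheses, not computation. Definition \ref{def1} states the Caputo range as $n\alpha-\alpha\le\beta<n\alpha$, so choosing $n=1$ requires $\beta<\alpha$, and this must be compatible with $0<\beta<1$. We will state this restriction explicitly. The lower limit $a$ in Definition \ref{def2} is to be read as $0$. We will also take $f(0)$ to mean $f(S_F^\alpha(0))$, and identify $\mathcal{L}$ with $\mathfrak{L}$.
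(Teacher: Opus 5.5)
Your proposal is correct and follows the paper's proof: linearity of the transform, \eqref{rli1} applied with order $1-\beta$, and \eqref{cd2} with $n=1$, followed by Lemma~\ref{rsl0} to pass to the Sumudu form. Your use of \eqref{rli1} with order $1-\beta$, which gives $S_F^\alpha(v)^{\beta-1}\mathfrak{L}[f]$, is the right one; the paper's first displayed line writes $\mathfrak{L}[f]/S_F^\alpha(v)^\beta$, but its next line uses $S_F^\alpha(v)^{\beta-1}$. Your direct Sumudu cross-check and your remark that $n=1$ requires $\beta<\alpha$ are sound additions.
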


\begin{proof}
    Using (\ref{rli1}) and (\ref{cd2}) for $n=1$, we get \begin{align}\label{rlc4}
        \mathcal{L}[^{RLC}_0\mathcal{D}_t^\beta f](S^\alpha_F(v))&=L_1(\beta)\dfrac{\mathfrak{L}[f]}{S_F^\alpha(v)^\beta}+L_2(\beta)\left(S^\alpha_F(v)^\beta\mathcal{L}[f](S^\alpha_F(v)-S^\alpha_F(v)^{\beta-1}f(0)\right)\nonumber\\&=\bigg[L_1(\beta)S^\alpha_F(v)^{\beta-1}+L_2(\beta)S^\alpha_F(v)^\beta\bigg]\mathfrak{L}[f]-L_2(\beta)S^\alpha_F(v)^{\beta-1}f(0)\nonumber\\&=\bigg[\dfrac{L_1(\beta)}{S^\alpha_F(v)}+L_2(\beta)\bigg]S^\alpha_F(v)^\beta\mathfrak{L}[f]-L_2(\beta)S^\alpha_F(v)^{\beta-1}f(0).
    \end{align}
    We get the Sumudu transform using Lemma \ref{rsl0} in (\ref{rlc4}).
\end{proof}

\begin{proposition}
    The inverse of the RLC-derivative operator $^{RLC}_0\mathcal{D}_t^\beta$ is \begin{align*} 
        {^{RLC}_0}I_t^\beta f(S^\alpha_F(t))=\dfrac{1}{L_2(\beta)}\int_{S^\alpha_F(0)}^{s^\alpha_F(t)}e^{-\frac{L_1(\beta)}{L_2(\beta)}(S^\alpha_F(t)-S^\alpha_F(v))} {^{}_0}\mathcal{D}_v^{1-\beta} f(S^\alpha_F(v))d^\alpha_Fv,
    \end{align*} which satisfies the relations \begin{align*}
        {^{RLC}_0} \mathcal{D}_t^\beta {^{RLC}_0} I_t^\beta f(S^\alpha_F(t))=f(S^\alpha_F(t))-\dfrac{S^\alpha_F(t)^{-\beta}}{\Gamma^\alpha_F(1-\beta)}F\_\lim\limits_{S^\alpha_F(t)\to 0} {^{}_0} I_t^{\beta} f(S^\alpha_F(t))\end{align*}
        and \begin{align*}%\label{rlc7}
        {^{RLC}_0} I_t^\beta {^{RLC}_0}\mathcal{D}_t^\beta f(S^\alpha_F(t))=f(S^\alpha_F(t))-e^{-\frac{L_1(\beta)}{L_2(\beta)}S^\alpha_F(t)}f(0).
    \end{align*} 
\end{proposition}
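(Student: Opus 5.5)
The plan is to work entirely on the transform side, using the fractal Laplace transform $\mathfrak{L}$ and the formulas already established. We use the RLC transform theorem above, \eqref{rli1} for the fractal integral, the Riemann--Liouville transform with $n=1$, and the convolution rule. On the Laplace side, the Sumudu convolution formula together with Lemma \ref{rsl0} gives $\mathfrak{L}[f\ast g]=\mathfrak{L}[f]\,\mathfrak{L}[g]$. Write $s=S^\alpha_F(v)$ and $c=L_1(\beta)/L_2(\beta)$. The proposed inverse is then $\frac{1}{L_2(\beta)}\big(e^{-cS^\alpha_F(\cdot)}\ast {}_0\mathcal{D}^{1-\beta}f\big)$. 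The fractal Laplace transform of the exponential is $1/(s+c)$, which is the $a=\eta=1$ case of the Mittag-Leffler transform after passing through Lemma \ref{rsl0}. Hence
\begin{align*}
\mathfrak{L}\big[{}^{RLC}_0I^\beta_t f\big](s)=\frac{1}{L_2(\beta)(s+c)}\Big(s^{1-\beta}\mathfrak{L}[f](s)-\big[{}_0\mathcal{I}^{\beta}_t f\big]_{t=0}\Big).
\end{align*}
Here we use that ${}_0\mathcal{D}^{(1-\beta)-1}={}_0\mathcal{I}^{\beta}$, so the boundary term is the $F$-limit appearing in the statement.

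For the first relation, we apply the RLC transform theorem to $g={}^{RLC}_0I^\beta_t f$. The prefactor is $\big(L_1(\beta)s^{-1}+L_2(\beta)\big)s^{\beta}=L_2(\beta)s^{\beta-1}(s+c)$, which exactly cancels the factor $1/(L_2(\beta)(s+c))$. This leaves
\begin{align*}
\mathfrak{L}[f](s)-s^{\beta-1}\big[{}_0\mathcal{I}^{\beta}_tf\big]_{t=0}-L_2(\beta)s^{\beta-1}g(0).
\end{align*}
Next we argue that $g(0)=0$, because the convolution integral is taken over a degenerate interval. We then invert $s^{\beta-1}$ into $S^\alpha_F(t)^{-\beta}/\Gamma^\alpha_F(1-\beta)$, using the power-function transform that is implicit in \eqref{rli1} applied to the constant $1$. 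This yields the stated identity.

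For the second relation, we substitute $\mathfrak{L}[{}^{RLC}_0\mathcal{D}^\beta_t f]=L_2(\beta)s^{\beta-1}(s+c)\mathfrak{L}[f]-L_2(\beta)s^{\beta-1}f(0)$ into the formula for $\mathfrak{L}[{}^{RLC}_0I^\beta_t\,\cdot\,]$. Multiplying by $s^{1-\beta}/(L_2(\beta)(s+c))$ gives $\mathfrak{L}[f]-f(0)/(s+c)$. Inverting this produces $f-e^{-cS^\alpha_F(t)}f(0)$, provided the initial term $\big[{}_0\mathcal{I}^{\beta}\,{}^{RLC}_0\mathcal{D}^\beta_tf\big]_{t=0}$ vanishes.

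The main obstacle is justifying these vanishing boundary terms. For the second relation, ${}_0\mathcal{I}^\beta$ composed with ${}_0\mathcal{I}^{1-\beta}$ applied to $L_1f+L_2D^\alpha_Ff$ is just ${}_0\mathcal{I}^1$ of a bounded function, so it tends to $0$. This needs the semigroup property, which can itself be checked through \eqref{rli1} and injectivity of $\mathfrak{L}$. For the first relation, $g(0)=0$ requires ${}_0\mathcal{D}^{1-\beta}f$ to be integrable near $0$, so we assume $f\in C^\alpha_F$ and accept this mild regularity hypothesis.
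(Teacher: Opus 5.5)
The paper states this proposition without any proof, so there is no argument of the authors' to compare with. Your transform-side verification is correct and supplies the missing proof, at the same level of rigor the paper uses elsewhere. The following steps all check out:
\begin{itemize}
\item the factorization $\bigl(L_1(\beta)s^{-1}+L_2(\beta)\bigr)s^\beta=L_2(\beta)s^{\beta-1}(s+c)$;
\item the Laplace form $s^{1-\beta}\mathfrak{L}[f]-[{}_0\mathcal{I}^\beta_t f]_{t=0}$ of the Riemann--Liouville formula, obtained from its Sumudu version via Lemma \ref{rsl0};
\item the rule $\mathfrak{L}[f\ast g]=\mathfrak{L}[f]\,\mathfrak{L}[g]$;
\item the vanishing of $[{}_0\mathcal{I}^\beta_t\,{}^{RLC}_0\mathcal{D}^\beta_t f]_{t=0}$ via ${}_0\mathcal{I}^\beta\,{}_0\mathcal{I}^{1-\beta}={}_0\mathcal{I}^1$.
\end{itemize}
As a consistency check, the first example of Section 4 is exactly the case ${}^{RLC}_0\mathcal{D}^\beta_t f=0$ of your second relation.

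Two points need tightening.

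First, the inversion of $s^{\beta-1}$ does not follow from \eqref{rli1} applied to the constant $1$. That only gives $\mathfrak{L}[S^\alpha_F(t)^\gamma/\Gamma^\alpha_F(\gamma+1)]=s^{-\gamma-1}$ for $\gamma>0$, whereas you need $\gamma=-\beta<0$. You can fix this in either of two ways:
\begin{itemize}
\item compute $\int_0^\infty e^{-sS^\alpha_F(t)}S^\alpha_F(t)^{-\beta}\,d^\alpha_F t=\Gamma^\alpha_F(1-\beta)s^{\beta-1}$ directly; or
\item apply the Riemann--Liouville transform formula to $f\equiv 1$, using ${}_0\mathcal{D}^\beta_t 1=S^\alpha_F(t)^{-\beta}/\Gamma^\alpha_F(1-\beta)$ and $[{}_0\mathcal{I}^{1-\beta}_t 1]_{t=0}=0$.
\end{itemize}

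Second, your regularity assumption is stronger than you want. If $f\in C^\alpha_F$ is bounded near $0$, then $|{}_0\mathcal{I}^\beta_t f|\le \sup|f|\,S^\alpha_F(t)^\beta/\Gamma^\alpha_F(\beta+1)\to 0$. So the boundary term in the first relation is identically zero, and the relation reduces to ${}^{RLC}_0\mathcal{D}^\beta_t\,{}^{RLC}_0I^\beta_t f=f$.

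The boundary term only has content for $f$ with an $S^\alpha_F(t)^{-\beta}$ singularity at the origin. For $f=S^\alpha_F(t)^{-\beta}$, for instance, ${}_0\mathcal{D}^{1-\beta}_t f=0$ and both sides vanish. Your argument covers such $f$ unchanged if you replace ``$f\in C^\alpha_F$'' by the hypothesis you actually use: integrability of ${}_0\mathcal{D}^{1-\beta}_t f$ near $0$. That hypothesis gives $g(0)=0$. It also makes $D^\alpha_F g=-c\,g+L_2(\beta)^{-1}\,{}_0\mathcal{D}^{1-\beta}_t f$ integrable near $0$, so the RLC transform formula still applies to $g$.

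For the second relation you still need $f(0)$ finite and $D^\alpha_F f$ integrable near $0$, which your boundedness assumption covers.
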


\section{Equations containing fractal RLC-derivative operator and the solutions to them}

In this section, we solve some equations involving fractal RLC-derivation operator using the fractal Laplace and Sumudu transforms.
\begin{example}
    Consider the following fractal differential equation \begin{align*}%\label{rlc8}
        {^{RLC}_0} \mathcal{D}_t^\beta f(S^\alpha_F(t))=0,\quad f(0)=M.
    \end{align*} Applying fractal Laplace transform and using $f(0)=M$, we obtain
    \begin{align*} 
        &\left(\frac{L_1(\beta)}{S^\alpha_F(v)}+L_2(\beta)\right)S^\alpha_F(v)^\beta\mathcal{L}[f](S^\alpha_F(v))-L_2(\beta)S^\alpha_F(v)^{\beta-1}M=0\nonumber\\\implies& \mathcal{L}[f](S^\alpha_F(v))=\dfrac{L_2(\beta)S^\alpha_F(v)^{\beta-1}M}{\left(\frac{L_1(\beta)}{S^\alpha_F(v)}+L_2(\beta)\right)S^\alpha_F(v)^\beta}=\dfrac{M}{S^\alpha_F(v)+\dfrac{L_1(\beta)}{L_2(\beta)}}.
    \end{align*} Taking the inverse Laplace transform, we get \begin{align*}
            f(S^\alpha_F(t))=Me^{-\frac{L_1(\beta)}{L_2(\beta)}S^\alpha_F(t)}. 
    \end{align*}
\end{example}

\begin{example}
    Consider the following fractal derivative for an arbitrary constant $\lambda$, \begin{align*}%\label{rlc11}
        {^{RLC}_0} \mathcal{D}_t^\beta f(S^\alpha_F(t))=\lambda f(S^\alpha_F(t)),\quad f(0)=1.
    \end{align*} Applying fractal Sumudu transform, we get \begin{align*}
        &\dfrac{S^\alpha_F(v)L_1(\beta)+L_2(\beta)}{S^\alpha_F(v)^\beta}\mathscr{S}[f](S^\alpha_F(v))-\dfrac{L_2(\beta)}{S^\alpha_F(v)^\beta}f(0)=\lambda\mathscr{S}[f](S^\alpha_F(v))\nonumber\\ \implies&\mathscr{S}[f](S^\alpha_F(v))=\dfrac{L_2(\beta)}{S^\alpha_F(v)L_1(\beta)+L_2(\beta)-\lambda S^\alpha_F(v)^\beta}\nonumber\\& \hspace{2.13cm}=\left(1-\dfrac{\lambda S^\alpha_F(v)^\beta-S^\alpha_F(v)L_1(\beta)}{L_2(\beta)}\right)^{-1}\nonumber\\& \hspace{2.13cm}=\sum_{n=0}^{\infty}\left(\dfrac{\lambda S^\alpha_F(v)^\beta-S^\alpha_F(v)L_1(\beta)}{L_2(\beta)}\right)^n\nonumber\\& \hspace{2.13cm}=\sum_{n=0}^{\infty}\dfrac{1}{L_2(\beta)^n}\sum_{k=0}^{n}\binom{n}{k}\left(\lambda S^\alpha_F(v)^\beta\right)^{n-k}(-S^\alpha_F(v)L_1(\beta))^k\nonumber\\& \hspace{2.13cm}=\sum_{n=0}^{\infty}\sum_{k=0}^{n}\binom{n}{k}\dfrac{(-L_1(\beta))^n\lambda^{n-k}}{L_2(\beta)^n}S^\alpha_F(v)^{\beta n-\beta k+k}.%\label{rlc12}
    \end{align*}
    Taking the inverse Sumudu transform for each $n$, we obtain \begin{align*}        f(S^\alpha_F(t))=\sum_{n=0}^{\infty}\sum_{k=0}^{n}\binom{n}{k}\dfrac{(-L_1(\beta))^n\lambda^{n-k}}{L_2(\beta)^n}\cdot\dfrac{S^\alpha_F(t)^{\beta n-\beta k+k}}{\Gamma_F^\alpha(\beta n-\beta k+k+1)}.%\label{rlc13} 
    \end{align*}
\end{example}

\section{Economic model formulated via the RLC fractal derivative}

\begin{enumerate}[\text{Case }1.]
    \item The price adjustment equation (see \cite{Cohen}),  formulated using the RLC fractal derivative without the expectations of agents, is given as follows
    \begin{align}\label{rlc14}  ^{RLC}\mathfrak{D}^\beta_Fp(S^\alpha_F(t))+k(d_1+s_1)p(S^\alpha_F(t))=k(d_0+s_0).
    \end{align} 
    Applying fractal Sumudu transform, we have 
        \begin{align*}
        &\dfrac{S^\alpha_F(v)L_1(\beta)+L_2(\beta)}{S^\alpha_F(v)^\beta}\mathscr{S}[p](S^\alpha_F(v))-\dfrac{L_2(\beta)}{S^\alpha_F(v)^\beta}p(0)+k(d_1+s_1)\mathscr{S}[p](S^\alpha_F(v))=k(d_0+s_0)\nonumber\\ \implies&\mathscr{S}[p](S^\alpha_F(v))=\dfrac{L_2(\beta)p(0)+k(d_0+s_0)S^\alpha_F(v)^\beta}{S^\alpha_F(v)L_1(\beta)+L_2(\beta)+k(d_1+s_1)S^\alpha_F(v)^\beta}\nonumber\\ & \hspace{2.15cm}=\dfrac{d_0+s_0}{d_1+s_1}\left[1-\dfrac{-L_2(\beta)-S^\alpha_F(v)L_1(\beta)}{k(d_1+s_1)S^\alpha_F(v)^\beta}\right]^{-1}\nonumber\\ & \hspace{2.15cm}+p(0)\left[1-\dfrac{-S^\alpha_F(v)L_1(\beta)-k(d_1+s_1)S^\alpha_F(v)^\beta}{L_2(\beta)}\right]^{-1}\nonumber\\ & \hspace{2.15cm}=\dfrac{d_0+s_0}{d_1+s_1}\sum_{j=0}^{\infty}\left[\dfrac{-L_2(\beta)-S^\alpha_F(v)L_1(\beta)}{k(d_1+s_1)S^\alpha_F(v)^\beta}\right]^j\nonumber\\ & \hspace{2.15cm}+p(0)\sum_{j=0}^{\infty}\left[\dfrac{-S^\alpha_F(v)L_1(\beta)-k(d_1+s_1)S^\alpha_F(v)^\beta}{L_2(\beta)}\right]^j\nonumber\\ & \hspace{2.15cm}=\dfrac{d_0+s_0}{d_1+s_1}\sum_{j=0}^{\infty}\sum_{r=0}^{j}(-1)^j\binom{j}{r}\dfrac{L_1(\beta)^{j-r}L_2(\beta)^r}{k^j(d_1+s_1)^j}S^\alpha_F(v)^{(1-\beta)j-r}\nonumber\\ & \hspace{2.15cm}+p(0)\sum_{j=0}^{\infty}\sum_{r=0}^{j}(-1)^j\binom{j}{r}\dfrac{L_1(\beta)^{j-r}k^r(d_1+s_1)^r}{L_2(\beta)^j}S^\alpha_F(v)^{j+(\alpha-1)r}.%\label{rlc14a}
    \end{align*}
    Taking the inverse Sumudu transform for each $n$, we obtain \begin{align*}
        &p(S^\alpha_F(t))=\dfrac{d_0+s_0}{d_1+s_1}\sum_{j=0}^{\infty}\sum_{r=0}^{j}(-1)^j\binom{j}{r}\dfrac{L_1(\beta)^{j-r}L_2(\beta)^r}{k^j(d_1+s_1)^j}\cdot\dfrac{S^\alpha_F(t)^{(1-\beta)j-r}}{\Gamma_F^\alpha((1-\beta)j-r+1)}\nonumber\\ & \hspace{2.15cm}+p(0)\sum_{j=0}^{\infty}\sum_{r=0}^{j}(-1)^j\binom{j}{r}\dfrac{L_1(\beta)^{j-r}k^r(d_1+s_1)^r}{L_2(\beta)^j}\cdot\dfrac{S^\alpha_F(t)^{j+(\alpha-1)r}}{\Gamma_F^\alpha(j+(\alpha-1)r+1)}.
    \end{align*}
    \item The price adjustment equation, formulated using the wsk-fractal derivative, when the expectations of agents are considered, is given by \begin{align*}
        ^{RLC}\mathfrak{D}^\beta_Fp(S_F^\alpha(t))-\dfrac{d_1+s_1}{d_2+s_2}p(S_F^\alpha(t))=-\dfrac{d_0+s_0}{d_2+s_2}.
    \end{align*} Applying Laplace Sumudu transform, we get \begin{align*}
        &\dfrac{S^\alpha_F(v)L_1(\beta)+L_2(\beta)}{S^\alpha_F(v)^\beta}\mathscr{S}[p](S^\alpha_F(v))-\dfrac{L_2(\beta)}{S^\alpha_F(v)^\beta}p(0)-\dfrac{d_1+s_1}{d_2+s_2}\mathscr{S}[p](S^\alpha_F(v))=-\dfrac{d_0+s_0}{d_2+s_2}\nonumber\\\implies&\mathscr{S}[p](S^\alpha_F(v))=\dfrac{L_2(\beta)p(0)(d_2+s_2)-(d_0+s_0)S^\alpha_F(v)^\beta}{S^\alpha_F(v)L_1(\beta)(d_2+s_2)+L_2(\beta)(d_2+s_2)-(d_1+s_1)S^\alpha_F(v)^\beta}\nonumber\end{align*}\begin{align*} & \hspace{2.15cm}=\dfrac{d_0+s_0}{d_1+s_1}\left[1-\dfrac{(S^\alpha_F(v)L_1(\beta)+L_2(\beta))(d_2+s_2)}{(d_1+s_1)S^\alpha_F(v)^\beta}\right]^{-1}\nonumber\\ & \hspace{2.15cm}+p(0)\left[1-\dfrac{(d_1+s_1)S^\alpha_F(v)^\beta-S^\alpha_F(v)L_1(\beta)(d_2+s_2)}{L_2(\beta)(d_2+s_2)}\right]^{-1}\nonumber\\ & \hspace{2.15cm}=\dfrac{d_0+s_0}{d_1+s_1}\sum_{j=0}^{\infty}\left[\dfrac{d_2+s_2}{(d_1+s_1)S^\alpha_F(v)^\beta}(S^\alpha_F(v)L_1(\beta)+L_2(\beta))\right]^j\nonumber\\ & \hspace{2.15cm}+p(0)\sum_{j=0}^{\infty}\left[\dfrac{(d_1+s_1)S^\alpha_F(v)^\beta-S^\alpha_F(v)L_1(\beta)(d_2+s_2)}{L_2(\beta)(d_2+s_2)}\right]^j\nonumber\\ & \hspace{2.15cm}=\dfrac{d_0+s_0}{d_1+s_1}\sum_{j=0}^{\infty}\sum_{r=0}^{j}\binom{j}{r}\left(\dfrac{d_2+s_2}{d_1+s_1}\right)^jL_1(\beta)^{j-r}L_2(\beta)^rS^\alpha_F(v)^{(1-\beta)j-r}\nonumber\\ & \hspace{2.15cm}+p(0)\sum_{j=0}^{\infty}\sum_{r=0}^{j}\binom{j}{r}\dfrac{(-L_1(\beta)(d_2+s_2))^{j-r}(d_1+s_1)^r}{L_2(\beta)^j(d_2+s_2)^j}S^\alpha_F(v)^{(\beta-1)j-r}.
    \end{align*}
    Taking the inverse Sumudu transform for each $n$, we obtain \begin{align}
        &p(S^\alpha_F(t))=\dfrac{d_0+s_0}{d_1+s_1}\sum_{j=0}^{\infty}\sum_{r=0}^{j}\binom{j}{r}\left(\dfrac{d_2+s_2}{d_1+s_1}\right)^j\dfrac{L_1(\beta)^{j-r}L_2(\beta)^rS^\alpha_F(t)^{(1-\beta)j-r}}{\Gamma_F^\alpha((1-\beta)j-r+1)}\nonumber\\ & \hspace{2.75cm}+p(0)\sum_{j=0}^{\infty}\sum_{r=0}^{j}\binom{j}{r}\dfrac{(-L_1(\beta)(d_2+s_2))^{j-r}(d_1+s_1)^rS^\alpha_F(t)^{j+(\alpha-1)r}}{L_2(\beta)^j(d_2+s_2)^j\,\Gamma_F^\alpha(j+(\alpha-1)r+1)}.\label{rlc18}\nonumber
    \end{align}    
\end{enumerate}

\section{Comparison of the solutions of different economic models}
%\textcolor{blue}{
In this section, we compare the solutions of the economic models formed by different fractal differential operators in two cases. In case I, the expectations of the agents are not considered. Meanwhile, we consider the expectations of the agents in case II.
%[\text{Case} I:]
\begin{enumerate}
    \item[Case I:] Here, we consider different solutions:
    \begin{itemize}
        \item The solution to the economic model \begin{eqnarray*}%\label{c1}
            ^{C}_0\mathcal{D}_x^\beta p(t)+\lambda(d_1+s_1)p(t)=\lambda(d_0+s_0),\;\beta\in(0,1),
        \end{eqnarray*} formulated using the Caputo fractal derivative is \cite{Ali6} \begin{equation}\label{c2}
            p(S^\alpha_F(t))=p(0)E_F^\alpha\left(-\lambda(d_1+s_1)S^\alpha_F(t)^\beta\right)+\dfrac{\lambda(d_0+s_0)}{d_1+s_1}\left(1-E_F^\alpha\left(-\lambda(d_1+s_1)S^\alpha_F(t)^\beta\right)\right),
        \end{equation}   
        where $\lambda>0$ and $E_F^\alpha(t)$ is the fractal Mittag-Leffler function with one parameter given by $$E_F^\alpha(t)=\sum_{j=0}^\infty\dfrac{S_F^\alpha(t)^j}{\Gamma_F^\alpha(\eta j+a)}, \mbox{~where~} p \mbox{~is ~an~} F^\alpha-\mbox{differential ~function~ on~} [a,b].$$ 
        \item The solution to the economic model
        \begin{eqnarray*}\label{c3}        ^{\text{wsk}}\mathfrak{D}^\gamma_Fp(S^\alpha_F(t))+k(d_1+s_1)p(S^\alpha_F(t))=k(d_0+s_0),
    \end{eqnarray*} formulated using the wsk-fractal derivative \cite{NHK2026}, is 
    \begin{eqnarray}
        p(S^\alpha_F(t))&=&\dfrac{\mathcal{N}(\gamma)p(0)}{\mathcal{N}(\gamma)+k(d_1+s_1)(1-\gamma)}e^{-\dfrac{k\gamma(d_1+s_1)S^\alpha_F(t)}{\mathcal{N}(\gamma)+k(1-\gamma)(d_1+s_1)}}\nonumber\\&&+\dfrac{d_0+s_0}{d_1+s_1}\left[1-e^{-\dfrac{\gamma k(d_1+s_1)}{\mathcal{N}(\gamma)+k(d_1+s_1)(1-\gamma)}S^\alpha_F(t)}\right],\;\gamma\in(0,1),\label{c4}
    \end{eqnarray} where $\mathcal{N}(\gamma)$ is a normalization function such that $\mathcal{N}(0)=\mathcal{N}(1)=1$.
        \item The solution to the economic model
        \begin{align}\label{c5}                  ^{RLC}\mathfrak{D}^\beta_Fp(S^\alpha_F(t))+k(d_1+s_1)p(S^\alpha_F(t))=k(d_0+s_0),
        \end{align} formulated using the RLC-derivative introduced in Section 5, is 
        \begin{align}
            p(S^\alpha_F(t))&=\dfrac{d_0+s_0}{d_1+s_1}\sum_{j=0}^{\infty}\sum_{r=0}^{j}(-1)^j\binom{j}{r}\dfrac{L_1(\beta)^{j-r}L_2(\beta)^r}{k^j(d_1+s_1)^j}\cdot\dfrac{S^\alpha_F(t)^{(1-\beta)j-r}}{\Gamma_F^\alpha((1-\beta)j-r+1)}\nonumber\\ & +p(0)\sum_{j=0}^{\infty}\sum_{r=0}^{j}(-1)^j\binom{j}{r}\dfrac{L_1(\beta)^{j-r}k^r(d_1+s_1)^r}{L_2(\beta)^j}\cdot\dfrac{S^\alpha_F(t)^{j+(\alpha-1)r}}{\Gamma_F^\alpha(j+(\alpha-1)r+1)},\label{c6}
        \end{align} where $L_1$ and $L_2$ are functions depending only on $\beta$.
    \end{itemize}
    The solution \eqref{c2} has  Mittag-Leffler type decay, not exponential decay. Its long-time behaviour is $$p(S^\alpha_F(t))\to\dfrac{\lambda(d_0+s_0)}{d_1+s_1}$$ whenever $E_F^\alpha\left(-\lambda(d_1+s_1)S^\alpha_F(t)^\beta\right)\to 0$.\\
    The solution \eqref{c4} does not contain the Mittag-Leffler type growth/decay seen in \eqref{c2}. Its behaviour is exponential. So it is not a direct special case of \eqref{c2} unless extra identifications are made.\\
    The solution \eqref{c6} reduces to a Mittag-Leffler type solution like \eqref{c2} when the parameters $L_1$, $L_2$ are chosen so that the double sums collapse to the single Mittag-Leffler series.\\
        %In general, these solutions are not equivalent. Each corresponds to a different level of approximation and  different parameter choices. To compare them numerically or analytically, one must specify the relationship among the parameters 
    In general, these solutions are not equivalent, as each corresponds to a different level of approximation and a distinct choice of parameters. Consequently, a meaningful numerical or analytical comparison of the solutions can only be made after specifying the relationships among the parameters $\beta$, $\gamma$, $\mathcal{N}(\gamma)$, $k$, $L_1$ and $L_2$.
    \item[Case-II:]  In this case as well, by comparing the solutions of the economic models, where the agents' expectations are explicitly taken into account, we conclude that the models are not equivalent. A meaningful numerical or analytical comparison of the solutions can only be carried out after specifying the relationships among the various parameters involved in the respective solutions. %In this case also, by comparing the solutions of the economic models (the expectations of the agents are considered here), we can conclude that they are not equivalent. We can compare them numerically or analytically only by specifying the relationship among different parameters present in the solutions.
\end{enumerate}
%}

\section{Conclusion}
%\textcolor{red}{
In this study, we introduced a new fractal derivative, namely the RLC-derivative, and developed the corresponding fractal Laplace and Sumudu transforms associated with this operator. As an application, an economic model was formulated and analyzed using the proposed derivative. Furthermore, a comparative study revealed that the solutions obtained using the Caputo fractal derivative, the wsk-fractal derivative, and the proposed RLC-derivative are not equivalent. Hence, the choice of the differential operators plays a crucial role in capturing different aspects of economic behavior and may provide new insights into the modeling of complex economic systems.%}

%\noindent  {\bf Funding:} Not Applicable, the research is not supported by any funding agency.\\
%{\bf Conflict of Interest/Competing interests:} The authors declare that they have no computing interests.\\
 %{\bf Availability of data and material:} The article does not contain any data for analysis.\\
 %{\bf Code Availability:} Not Applicable.\\
 %{\bf Author's Contributions:} All the authors have equal contribution for the preparation of the article. All authors read and approved the final manuscript.

%\newpage 
%\section*{Declaration}
%\noindent {\bf Availability of data and material:} The article does not contain any data for analysis.\\
%{\bf Code Availability:} Not Applicable.\\
%{\bf Author's Contributions:} All the authors have equal contributions for the preparation of the article. All authors read and approved the final manuscript.\\
%{\bf Conflict of Interest/Competing interests:} The authors declare that they have no computing interests. 

\section*{Acknowledgments}
The first author acknowledges support of the UGC Junior Research Fellowship funded under NTA Ref. No.: 231620032207.
%\newpage 


\begin{thebibliography}{00}

   \bibitem{Barnsley} M.F. Barnsley, {\it Fractals everywhere}, Academic press (2014).

\bibitem{Mandelbrot} B.B. Mandelbrot, {\it The fractal geometry of nature}, New York: WH freeman (1983). 173

\bibitem{Parvate2003} A. Parvate and A. Gangal,  Calculus on fractal subsets of real line-I: Formulation, {\it Fractals}, {\bf 17}(1) (2009) 53-81.


   \bibitem{Turner} M.J. Turner, {\it Modeling nature with fractals}. Leicester (2000).


 \bibitem{Nagle2011} R. K. Nagle, E. B. Saff, A. D. Snider, \emph{Fundamentals of Differential Equations}, 8th Edition, Pearson, London, 2011.

\bibitem{Ali2} A. K. Golmankhaneh, C. Tun\c{c},  Sumudu transform in fractal calculus, {\it Appl. Math. Comput.} {\bf 350} (2019) 386--401.

\bibitem{NHK2026} K. M. Nath, B. Hazarika,  H. Kalita,  (2026). Fractal Sumudu Transform and Economic Models. arXiv preprint arXiv:2602.17723.
   
\bibitem{Ali3}  A. K. Golmankhaneh, D. Baleanu,  New Derivatives on the Fractal Subset of Real-Line, {\it Entropy}, {\bf 18}(2), 1 (2016). https://doi.org/10.3390/e18020001.

   \bibitem{Ali4} A. K. Golmankhaneh, D. Baleanu, Non-local Integrals and Derivatives on Fractal Sets with Applications, {\it Open Physics},  {\bf 14}(1)(2016) 542--548. %https://doi.org/10.1515/phys-2016-0062


\bibitem{Ali5} A. K. Golmankhaneh, {\it Fractal Calculus and its Applications: $F^\alpha$-Calculus} (2022).  
   10.1142/12988.
   
   \bibitem{Cohen} D. Cohen-Vernik, A. Pazgal,  Price Adjustment Policy with Partial Refunds, {\it J. Retailing},  {\bf 93}(4)(2017) 507--526. %ISSN 0022-4359, https://doi.org/10.1016/j.jretai.2017.08.002.

 
   \bibitem{Ali6} A. K. Golmankhaneh, K. K. Ali, Resat Yilmazer, \& Mohammed K. A. Kaabar, Economic Models Involving Time Fractal, {\it J.  Math. Modeling Finance}, {\bf 1}(1)(2021) 131--146.


   

  
  
                    




   
   
   
   

   %\bibitem{Ali4}  Khalili Golmankhaneh, Alireza \& Jorgensen, Palle \& Serpa, Cristina \& Welch, Kerri. (2024). About Sobolev spaces on fractals: fractal gradians and Laplacians. Aequationes mathematicae. 99. 465-490. 10.1007/s00010-024-01060-6. 
\end{thebibliography}
 \end{document}